\newcommand{\ket}[1]{|#1\rangle}
\newcommand{\bra}[1]{\langle #1|}
\newcommand{\braket}[2]{\langle#1|#2\rangle}
\newcommand{\vp}{\varphi}
\newcommand{\rk}{\operatorname{rk}}
\newcommand{\supp}{\operatorname{supp}}
\newcommand{\wt}{\operatorname{wt}}
\newcommand{\bs}[1]{\{0,1\}^{#1}}
\newcommand{\bff}[2]{f:\bs{n}\to\bs{m}}
\newcommand{\f}[2]{\mathbb{F}_{#1}^{#2}}
\newcommand{\w}[1]{\mathbf{#1}}
\DeclareMathOperator{\GPK}{GPK}
\begin{document}
\title{On the Walsh and Fourier-Hadamard Supports of Boolean Functions From a Quantum Viewpoint}
\titlerunning{Boolean Functions from a Quanutm Viewpoint}
%
\author{Claude Carlet\inst{1,2} \and
Ulises Pastor-Díaz\inst{3} \and
José M. Tornero\inst{3}}
\authorrunning{C. Carlet et al.}
%
\institute{University of Bergen, Department of Informatics, 5005 Bergen, Norway \and University of Paris 8, Department of Mathematics, 93526 Saint-Denis, France \\\email{claude.carlet@gmail.com} \and
University of Sevilla, Departament of Algebra, 41012 Sevilla, Spain \\ \email{upastor@us.es}, \email{tornero@us.es}}
\maketitle              
\begin{abstract}
In this paper, we focus on the links between Boolean function theory and quantum computing. In particular, we study the notion of what we call fully-balanced functions and analyse the Fourier--Hadamard and Walsh supports of those functions having such property. We study the Walsh and Fourier supports of other relevant classes of functions, using what we call balancing sets. This leads us to revisit and complete certain classic results and to propose new ones.

We complete our study by extending the previous results to pseudo-Boolean functions (in relation to vectorial functions) and giving an insight on its applications in the analysis of the possibilities that a certain family of quantum algorithms can offer.

\keywords{Boolean functions  \and Quantum computing \and Walsh supports.}
\end{abstract}
\begin{section}{Introduction}

The main results of this paper deal with Boolean functions and do not require a knowledge in quantum computing and quantum algorithms, but they have been highly motivated by and have important applications in the analysis of the Generalised Phase Kick-Back quantum algorithm (a quantum algorithm inspired by the phase kick-back technique that the second and third authors introduced in \cite{gpk} and which is used to distinguish certain classes of functions). For this reason, we will begin this introduction by giving some notions about this model of computation and its relation to Boolean functions, specially for those readers coming from a Boolean function background. However, for a more general and in-depth explanation, \cite{oyt,kaye} can be consulted. 

A quantum computer is made up of qubits, which are Hilbert spaces of dimension two. A state of the qubit is a vector $\ket{\psi} = \alpha \ket{0} + \beta \ket{1},$ where $\alpha,\beta\in\mathbb{C}$, and which satisfies the normalisation condition $|\alpha|^2+|\beta|^2 = 1.$
Here, $\ket{0} = \begin{pmatrix} 1 & 0\end{pmatrix}^t$ and $\ket{1} = \begin{pmatrix} 0 & 1\end{pmatrix}^t$ are the column vectors of the canonical basis, and $\alpha$ and $\beta$ are called the amplitudes of the state. We are using the Dirac or so-called \emph{bra-ket} notation. In this notation, vectors are represented by kets, $\ket{\cdot}$, their duals (in the linear algebra sense) are denoted by bras, $\ket{\cdot}^* = \bra{\cdot}$, and the inner product is denoted by a bracket, $\braket{\cdot}{\cdot}$. Systems of multiple qubits are constructed using the tensor product (more particularly, the Kronecker specialisation) of the individual qubit systems, and a state is said to be entangled if it does not correspond to a pure tensor in said product (that is, it cannot be written as the Kronecker product of vectors in the individual qubit systems). In particular, elements of the canonical basis (called computational basis in this context) are represented using elements of $\f{2}{n}$ inside the ket: $\ket{\w{x}}_n = \bigotimes_{i=1}^n \ket{x_i},$ where $\w{x} = (x_1, x_2,\ldots, x_n) \in \f{2}{n}$. A general state of a system of $n$ qubits can be then written as $\ket{\psi}_n = \sum_{\w{x}\in\f{2}{n}} \alpha_{\w{x}}\ket{\w{x}}, \text{ where } \alpha_{\w{x}}\in\mathbb{C} \text{ for all } \w{x}\in\f{2}{n},
$ satisfying the normalisation condition $\sum_{\w{x}\in\f{2}{n}} |\alpha_{\w{x}}|^2 = 1.$ These qubit systems evolve by means of unitary matrices, and a quantum algorithm consists of the application of a unitary transformation to the first element of the computational basis (that is, the $\ket{\w{0}}_n$ vector) in a system of $n$ qubits and measuring the resulting state. We should recall that $\ket{\w{0}}_n = \otimes_{i=1}^n \ket{0}$ is not the zero vector. Indeed, vectors in the computational basis of an $n$-qubit system will be labeled using the elements of $\f{2}{n}$, and $\ket{\w{0}}_n$ will be just the first of them. The process of measuring a state, $\ket{\psi}_n = \sum_{\w{x}\in\f{2}{n}} \alpha_{\w{x}}\ket{\w{x}}$, makes it collapse into one of the elements of the computational basis, say $\ket{\w{y}}$, with probability $|\alpha_{\w{y}}|^2$, which in turn would give us $\w{y}\in\f{2}{n}$ as a result.

The Walsh transform has a deep relation to some quantum algorithms, like the Deutsch--Jozsa algorithm \cite{dyj} or the Bernstein--Vazirani algorithm \cite{byv2}. In fact, in the final superposition of these algorithms before measuring, the amplitude of a given state of the computational basis, $\ket{\w{z}}_n$ is
$$
\alpha_{\w{z}} = \frac{1}{\sqrt{2^n}}\sum_{\w{x}} (-1)^{f(\w{x})\oplus\w{x}\cdot\w{z}},
$$
where $f:\f{2}{n}\to\f{2}{}$ is the function used as an input in the algorithms. This, in particular, allows us to distinguish balanced functions from constant ones, as the Walsh transform of a balanced function---which coincides with the unnormalised amplitudes of the final superposition---takes value zero when evaluated at the zero point ($\alpha_{\w{0}} = 0$), while a constant function takes value $2^n$ ($\alpha_{\w{0}} = 1$). This implies that, in the constant situation, we will always obtain zero as the result of our measurement, while in the balanced situation we will always obtain a value different from zero. The relevance of studying these specific classes of functions springs from the fact that they are completely distinguishable using this technique, but is also due to its implications in quantum complexity theory \cite{bb1}. However, different classes of functions can be considered.

The technique used in these algorithms, called the phase kick-back, and its generalised version, the Generalised Phase Kick-Back or $\GPK$ \cite{gpk}, make this relation even more relevant. Indeed, if we use a vectorial function $F:\f{2}{n}\to\f{2}{m}$ as an input, then, after choosing $\w{y}\in\f{2}{m}$, which in this context is called a marker, the amplitudes of the states of the canonical basis in the final superposition of the $\GPK$ algorithm are:
$$
\alpha_{\w{z}} = \frac{1}{\sqrt{2^n}}\sum_{\w{x}} (-1)^{F(\w{x})\cdot\w{y}\oplus\w{x}\cdot\w{z}}.
$$
This is, once again, a normalised version of the Walsh transform (in this case of a vectorial function) and thus it seems clear that we can use the properties of the Walsh transform in distinguishing classes of functions using the $\GPK$. What is more, we will see in Section \ref{quant} that, for a particular class of functions, there is a relation between the Walsh transform of a vectorial function, $F$, and the Fourier--Hadamard support of the pseudo-Boolean function determined by the image of $F$. The aforementioned class of functions, which will be defined in Section \ref{FBsect}, will be referred to as the class of fully balanced functions, and the relation that we have pointed out can be used to solve the problem of determining the image of a fully balanced function when it is given as a black box using the $\GPK$. However. we do so in a different article \cite{gpk2}. We will proceed now to study the Fourier--Hadamard and Walsh transforms of vectorial functions.

\end{section}

\begin{section}{Preliminaries}

\begin{subsection}{Notation}

Throughout the whole paper we will work with vectors in $\f{2}{n}$, which we will write in bold. In particular, $\w{0}$ will denote the zero vector. A subset of $\f{2}{n}$ will be called a Boolean set. Regarding the different operations, we will use $\oplus$ when dealing with additions modulo $2$, but for additions either in $\mathbb{Z}$ or in $\f{2}{n}$ we will make use of $+$. Furthermore, we denote by $\cdot$ the usual inner product in $\f{2}{n}$, $\w{x}\cdot\w{y} = \bigoplus_{i=1}^{n} x_iy_i, \mbox{ for } \w{x},\w{y}\in\f{2}{n}.$

We will refer to mappings $f:\f{2}{n}\to \f{2}{}$ as Boolean functions, mappings $f:\f{2}{n}\to\mathbb{R}$ as pseudo-Boolean functions (in particular, a Boolean function can be seen as a pseudo-Boolean function) and mappings $F:\f{2}{n}\to\f{2}{m}$ as $(n,m)$-functions. Some important concepts regarding a Boolean function $f:\f{2}{n}\to \f{2}{}$ are the following. Its support, $\supp(f) = \{\w{x}\in\f{2}{n}\mid f(\w{x}) = 1\}$. Its Hamming weight, denoted by $\wt(f)$, will be the number of vectors $\w{x}\in\f{2}{n}$ such that $f(\w{x})= 1$. In other words, $\wt(f) = |\supp(f)|$. Its sign function is the integer-valued function $\chi_f(\w{x}) = (-1)^{f(\w{x})} = 1-2f(\w{x})$. Note also that $f$, can always be expressed uniquely as follows:
$$
f(\w{x}) = \bigoplus_{\w{u}\in\f{2}{n}} a_{\w{u}} \w{x}^{\w{u}}, \mbox{ where } \w{x}^{\w{u}} = \displaystyle\prod_{i=1}^{n}x_i^{u_i}.
$$ 
This expression is called the algebraic normal form, or ANF$(f)$. The degree of this polynomial is called the algebraic degree of $f$. The derivative of $f$ in the direction of $\w{a}\in\f{2}{n}$ is defined as $D_{\w{a}}(f)(\w{x}) = f(\w{x})\oplus f(\w{x}+\w{a})$. Finally, we will say that a Boolean multiset is a pair $M = (\f{2}{n},m)$ where $m:\f{2}{n} \to \mathbb{Z}_{\geq 0}$ is a pseudo-Boolean function that can take the value $0$. For each $\w{x} \in \f{2}{n}$ we will call $m(\w{x})$ its multiplicity and we will denote by $S_M = \{\w{x} \in \f{2}{n} \mid m(\w{x}) > 0\}$ the support of $M$. However, we will also represent multisets by using set notation but repeating every element of a given multiset as many times as the multiplicity indicates. For a more general overview on multisets \cite{syro} can be consulted.

\end{subsection}

\begin{subsection}{Fourier--Hadamard and Walsh transforms}

We will now give a quick summary on some results for Boolean and pseudo-Boolean functions, but for a more general reference, \cite{bfc} can be consulted.

The Fourier--Hadamard transform of a pseudo-Boolean function $f:\f{2}{n}\to\mathbb{R}$ is the function:
$$
\widehat{f}(\w{u}) = \sum_{\w{x}\in\f{2}{n}} f(\w{x})(-1)^{\w{x}\cdot\w{u}}. 
$$
We will call the Fourier--Hadamard support of $f$ the set of $\w{u}\in\f{2}{n}$ such that $\widehat{f}(\w{u}) \neq 0$ and its Fourier--Hadamard spectrum the multiset of all values $\widehat{f}(\w{u})$. It is important to underline the relation between the Fourier--Hadamard transform and linear functions. If we denote $l_{\w{u}}(\w{x}) = \w{u}\cdot\w{x}$ for $\w{u}\neq \w{0}$, we have:
\begin{align*}
\widehat{f}(\w{u}) & = \sum_{\w{x}\in\f{2}{n}} f(\w{x})(1-2\w{x}\cdot\w{u}) = \wt(f) - 2\wt(f\cdot l_{\w{u}}) \\ & = \wt(f\oplus l_{\w{u}}) - \wt(l_{\w{u}}) = \wt(f\oplus l_{\w{u}}) - 2^{n-1},
\end{align*}
while $\widehat{f}(\w{0}) = \wt(f)$. Given a Boolean function $f$, we can also calculate its Walsh transform:
$$
W_f(\w{u}) = \sum_{\w{x}\in\f{2}{n}} (-1)^{f(\w{x})\oplus\w{x}\cdot\w{u}}.
$$
We will analogously call the Walsh support of $f$ the set of $\w{u}\in\f{2}{n}$ such that $W_f(\w{u})\neq 0$ and its Walsh spectrum the multiset of all values $W_f(\w{u})$. It is clear that the Walsh transform of a Boolean function $f$ is the Fourier--Hadamard transform of its sign function, which implies by the linearity of the Fourier--Hadamard transform: $W_f = 2^n\delta_{\w{0}} - 2\widehat{f},$ where $\delta_{\w{0}}$ is the indicator of $\{\w{0}\}$ and the Boolean function $f$ is viewed here as a pseudo-Boolean function. In particular, if $\w{u}\neq \w{0}$, then we have $W_f(\w{u}) = -2\widehat{f}(\w{u})$, and thus any $\w{u}\neq \w{0}$ is in the Fourier--Hadamard support if and only if it is in the Walsh support. Regarding the zero vector we need to analyse two particular situations. When $f$ is the zero function, i.e., $f(\w{x}) = \w{0}$ for all $\w{x}\in\f{2}{n}$, then $\widehat{f}(\w{0}) = 0$ but $W_f(\w{0}) = 2^n$. On the other hand, if $f$ is a balanced function, that is, $\wt(f) = 2^{n-1}$, then $\widehat{f}(\w{0}) = 2^{n-1}$ but $W_f(\w{0}) = 0$. In any other situation the Fourier--Hadamard and Walsh supports will be the same. Some important properties of the Fourier--Hadamard transform---which result in similar properties for the Walsh transform---are the inverse Fourier--Hadamard transform formula: $\widehat{\widehat{f\,}}\!\! = 2^n f,$ and Parseval's relation:
$$
\sum_{\w{u}\in\f{2}{n}}  \widehat{f}^{\,\,2}(\w{u}) = 2^{n}\sum_{\w{x}\in\f{2}{n}} f^2(\w{x}),
$$
which for Boolean functions turns into
$$
\sum_{\w{u}\in\f{2}{n}}  \widehat{f}^{\,\,2}(\w{u}) = 2^{n} |\supp(f)|,
$$
and for the Walsh transform becomes
$$
\sum_{\w{u}\in\f{2}{n}}  W_f^2(\w{u}) = 2^{2n}.
$$

The Walsh transform of a vectorial function $F:\f{2}{n}\to\f{2}{m}$ is the function $W_F:\f{2}{n}\times \f{2}{m}\to \mathbb{Z}$ defined as follows:
$$
W_F(\w{u},\w{v}) = \sum_{\w{x}\in\f{2}{n}} (-1)^{\w{v}\cdot F(\w{x})\oplus\w{u}\cdot\w{x}},
$$
where $\w{u}\in\f{2}{n}$ and $\w{v}\in\f{2}{m}.$
\end{subsection}

\begin{subsection}{Reed--Muller codes}

We will devote Section \ref{FBsect} to analysing the concept of fully balanced sets and its relation with minimum weight codewords in Reed--Muller codes. For a deeper analysis on these codes \cite{mac} or \cite{bfc} can be consulted. Given a Boolean function $f:\f{2}{m}\to \f{2}{}$, we can identify it with a vector of length $2^m$ by fixing an ordering---we will use the lexicographical ordering---in $\f{2}{m}$. Said vector, $\w{f}$, is then the vector of evaluations of $f$ for the chosen order. The Reed--Muller code of order $r$ and length $n = 2^m$, noted $\mathcal{R}(r,m)$, is the set of vectors $\w{f}$ where $f:\f{2}{m}\to\f{2}{}$ is a Boolean function of degree at most $r$. Reed--Muller codes are linear codes with minimum distance---i.e., minimum weight among its non-zero vectors--- $2^{m-r}$ and dimension $1+m+\binom{m}{2}+\ldots+\binom{m}{r}$.

\end{subsection}

\end{section}

\begin{section}{On balanced sets and Fourier--Hadamard supports}\label{basicresults}

In this section, we introduce very simple concepts, on which we will build more complex and interesting ones. The fact that a hyperplane $H_{\w{x}} = \{\w{y}\in \f{2}{n} \mid 
 \w{x}\cdot\w{y} = 0\}$ for $\w{x}$ nonzero has $2^{n-1}$ elements can be stated in a way reminiscent of the Fourier--Hadamard transform. Given a vector space $E$ in $\f{2}{n}$, and $l_{\w{y}}$ a nontrivial linear form in $E$, then 
 $$
 \displaystyle\sum_{\w{x}\in E} (-1)^{l_{\w{y}}(\w{x})} = 0.
 $$
 We will say that $\w{y}$---the vector which determines $l_{\w{y}}$---balances $\f{2}{n}$.

\begin{definition}{($\mathbf{y}$-Balanced sets.)}\label{balset}
Let $\mathbf{y}\in\mathbb{F}_2^n$ be a nonzero binary vector, we say that a nonempty set $S\subset\mathbb{F}_2^n$ is balanced with respect to $\mathbf{y}$ or $\mathbf{y}$-balanced if:
$$
\big|( S\cap H_{\mathbf{y}} )\big| = \frac{|S|}{2}.
$$
That is, $S$ is halved by $\mathbf{y}$ with respect to the inner product.
\end{definition}

We will also say that $\w{y}$ balances $S$. If the size of $S$ is odd, then it is clear that no vector balances $S$.

\begin{remark}
There are a few equivalent ways to define this notion.

Let $\w{1}_S$ be the indicator vector of the set $S$ and $\w{l}_{\w{y}}$ the vector associated with the linear form $l_{\w{y}}$, then stating that $\w{y}$ balances $S$ is equivalent to saying that $\wt(\w{1}_S \w{l}_{\w{y}}) = |S|/2,$ as $\w{1}_S \w{l}_{\w{y}}$---the component-wise product of the two vectors, also known as the Hadamard product---is the indicator vector of $S\cap (\f{2}{n}\setminus H_{\w{y}})$.

In another equivalent way, $\w{y} \neq \w{0}$ balances $S$ if (and only if) $1_S\oplus l_{\w{y}}$ is a balanced function---i.e., a function of weight $2^{n-1}$---where $1_S$ is the indicator function of $S$ and $l_{\w{y}}$ the linear function determined by $\w{y}$, since $\wt(1_{S}\oplus l_{\w{y}}) = \wt(1_S) + \wt(l_{\w{y}})-2\wt(1_S\ l_{\w{y}})$ and $\wt(l_{\w{y}}) = 2^{n-1}$.

A third way of defining this notion, and the one we will mostly focus on, is by means of the Fourier--Hadamard transform. The nonzero vector $\w{y}\in\f{2}{n}$ balances a nonempty set $S$ if and only if:
$$
\widehat{1}_S(\w{y}) = \sum_{\w{x}\in \f{2}{n}} 1_S(\w{x})(-1)^{\w{x}\cdot\w{y}} = \sum_{\mathbf{x}\in S} (-1)^{\mathbf{x}\cdot\mathbf{y}} = 0,
$$
or equivalently $W_{1_S}(\w{y}) = 0.$
\end{remark}

In the same manner, we can define the idea of being $\mathbf{y}$-constant.

\begin{definition}{($\mathbf{y}$-Constant sets.)}\label{constset}
Let $\mathbf{y}\in\mathbb{F}_2^n\setminus \{\w{0}\}$ be a nonzero binary vector, we say that a nonempty set $S\subset\mathbb{F}_2^n$ is constant with respect to $\mathbf{y}$ or $\mathbf{y}$-constant if either
$$
S \subset H_\mathbf{y} \quad \text{ or }\quad S \cap H_\mathbf{y} = \varnothing,
$$
that is, the product $\w{x}\cdot\w{y}$ is constant for all $\w{x}\in S$. We will say that every nonempty set $S$ is $\w{0}$-constant. 
\end{definition}

\begin{remark}
We can also express this idea by means of the Fourier--Hadamard transform: given a nonempty set $S$ and its indicator function $1_S$, it is $\mathbf{y}$-constant if and only if
$$
\left|\widehat{1}_S(\w{y}) \right| = \left|\sum_{\mathbf{x}\in S} (-1)^{\mathbf{x}\cdot\mathbf{y}}\right| =  |S|.
$$
\end{remark}

Analogously, given a set $B\subset\f{2}{n}$, we will say that $S$ is $B$-balanced ($B$-constant) if it is $\w{y}$-balanced ($\w{y}$-constant) for every $\w{y}\in B$. It is important to note that the definition of $B$-constant does not require that the product $\mathbf{x} \cdot \mathbf{y}$ be the same for every pair $\mathbf{x} \in S$, $\mathbf{y} \in B$, but rather constant for $\w{x}\in S$ once a $\w{y}\in B$ is fixed.

\begin{definition}{(Balancing set and constant set.)}
Let $S\subset\mathbb{F}_2^n$ be a nonempty set, then we call its balancing set, denoted by $B(S)$, the set of all binary vectors $\mathbf{y}\in\mathbb{F}_2^n$ such that $S$ is $\mathbf{y}$-balanced and its constant set, denoted by $C(S)$, the set of all binary vectors $\mathbf{y}\in\mathbb{F}_2^n$ such that $S$ is $\mathbf{y}$-constant.
\end{definition}

In the next remark we will explain the interest of defining the balancing and constant sets in this manner.

\begin{remark}
Given a nonzero Boolean function $f:\f{2}{n}\to\f{2}{}$ with support $S = \supp(f)$, we have that its Fourier--Hadamard support is $\supp(\widehat{f}) = \f{2}{n}\setminus B(S)$. Following the relation $W_f = 2^n\delta_{\w{0}}-2\widehat{f}$, if $f$ is not a balanced function, we have that its Walsh support is $\supp(W_f) = \supp(\widehat{f}) = \f{2}{n}\setminus B(S)$. However, if $f$ is balanced, we have $\supp(W_f) = \supp(\widehat{f})\setminus \{\w{0}\} = \f{2}{n}\setminus (B(S)\cup \{\w{0}\})$.
\end{remark}

We will begin by considering the constant set problem. The following result follows from the Fourier--Hadamard transform formula.

\begin{lemma}
Let $S\subset \mathbb{F}_2^n$ and $\mathbf{s} + S = \{\mathbf{s} + \mathbf{x} \mid \mathbf{x} \in S\}$ be the translation of $S$ by $\mathbf{s}\in\mathbb{F}_2^n$, then $C(S) = C(\mathbf{s}+S)$.
\end{lemma}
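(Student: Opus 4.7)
The plan is to invoke the characterization of $\mathbf{y}$-constant sets given in the proposition, which converts the question into one about the exponential sum $\sum_{\mathbf{x}\in S}(-1)^{\mathbf{x}\cdot\mathbf{y}}$. Once this is done, translating $S$ by $\mathbf{s}$ only introduces a global sign, which disappears after taking absolute values.

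Concretely, I would fix an arbitrary $\mathbf{y}\in\mathbb{F}_2^n$ and, using the bilinearity of the pairing, compute
$$
\sum_{\mathbf{z}\in\mathbf{s}+S}(-1)^{\mathbf{z}\cdot\mathbf{y}} \;=\; \sum_{\mathbf{x}\in S}(-1)^{(\mathbf{s}+\mathbf{x})\cdot\mathbf{y}} \;=\; (-1)^{\mathbf{s}\cdot\mathbf{y}}\sum_{\mathbf{x}\in S}(-1)^{\mathbf{x}\cdot\mathbf{y}}.
$$
Since $(-1)^{\mathbf{s}\cdot\mathbf{y}}=\pm 1$, taking absolute values yields
$$
\left|\sum_{\mathbf{z}\in\mathbf{s}+S}(-1)^{\mathbf{z}\cdot\mathbf{y}}\right| \;=\; \left|\sum_{\mathbf{x}\in S}(-1)^{\mathbf{x}\cdot\mathbf{y}}\right|,
$$
and since translation by $\mathbf{s}$ is a bijection we also have $\#(\mathbf{s}+S) = \#S$. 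Therefore the equality in the characterization of the constant set holds for $S$ if and only if it holds for $\mathbf{s}+S$, so $\mathbf{y}\in C(S) \Longleftrightarrow \mathbf{y}\in C(\mathbf{s}+S)$, and the lemma follows.

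There is no real obstacle here; the content of the proof is nothing more than the bilinearity of the pairing together with the fact that $|\cdot|$ discards the shift $(-1)^{\mathbf{s}\cdot\mathbf{y}}$. As a sanity check one could also give a purely set-theoretic argument: if $\mathbf{s}\cdot\mathbf{y}=0$ then $\mathbf{x}\mapsto\mathbf{s}+\mathbf{x}$ maps $S\cap H_\mathbf{y}$ bijectively onto $(\mathbf{s}+S)\cap H_\mathbf{y}$, while if $\mathbf{s}\cdot\mathbf{y}=1$ it maps $S\cap H_\mathbf{y}$ bijectively onto $(\mathbf{s}+S)\setminus H_\mathbf{y}$; in either case the property of being wholly inside or wholly outside $H_\mathbf{y}$ is preserved.
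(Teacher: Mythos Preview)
Your proof is correct. The paper's argument is marginally more direct: rather than invoking the exponential-sum characterization, it simply observes that $\mathbf{y}\in C(S)$ means $\mathbf{x}_1\cdot\mathbf{y}=\mathbf{x}_2\cdot\mathbf{y}$ for all $\mathbf{x}_1,\mathbf{x}_2\in S$, and then adds $\mathbf{s}\cdot\mathbf{y}$ to both sides to get $(\mathbf{x}_1+\mathbf{s})\cdot\mathbf{y}=(\mathbf{x}_2+\mathbf{s})\cdot\mathbf{y}$, proving only one inclusion and deducing the other from $S=\mathbf{s}+(\mathbf{s}+S)$. Your route through Proposition~1 and your set-theoretic sanity check are both equally valid and amount to the same underlying use of bilinearity; the difference is cosmetic rather than substantive.
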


Indeed, it is clear that $\widehat{1_{\w{s}+S}}(\w{y}) = (-1)^{\w{s}\cdot\w{y}} \widehat{1_S}(\w{y}).$ Using this result we can simply consider that $\mathbf{0} \in S$ without loss of generality, which simplifies things, as then, if $\mathbf{y}\in\mathbb{F}_2^n$ makes $S$ constant, it actually makes it $0$ and we can find $C(S)$ by solving a system of linear equations. In this situation, since $\w{y}$ belongs to $C(S)$ if and only if it is orthogonal to every element $\w{x}\in S$, we have:

\begin{lemma}{(Constant set.)}
Let $S$ be a set such that $\mathbf{0}\in S$, then
$$
C(S) = \bigcap_{\mathbf{x} \in S} H_\mathbf{x},
$$ 
which is the linear subspace $\langle S \rangle^{\perp}$ of dimension $n-\rk (S)$, where $\rk(S)$ (the rank of $S$) is the dimension of $\langle S \rangle$---the linear space spanned by $S$---and $\langle S \rangle^{\perp}$ is the orthogonal space of $S$ with respect to the $\cdot$ product.
\end{lemma}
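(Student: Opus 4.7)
The plan is to unfold the definition of $C(S)$ under the hypothesis $\mathbf{0}\in S$, observe that constancy forces orthogonality, and then recognize the resulting set as the orthogonal complement of the span of $S$ with respect to the pairing, whose dimension follows from standard rank--nullity.

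First, I would prove the set-theoretic equality $C(S)=\bigcap_{\mathbf{x}\in S}H_{\mathbf{x}}$. Fix $\mathbf{y}\in \mathbb{F}_2^n$ and suppose $S$ is $\mathbf{y}$-constant. Since $\mathbf{0}\in S$ and $\mathbf{0}\cdot\mathbf{y}=0$, the common value of $\mathbf{x}\cdot\mathbf{y}$ on $S$ must be $0$; hence $\mathbf{x}\in H_{\mathbf{y}}$ (equivalently, $\mathbf{y}\in H_{\mathbf{x}}$, since the pairing is symmetric) for every $\mathbf{x}\in S$. Conversely, if $\mathbf{y}\in H_{\mathbf{x}}$ for all $\mathbf{x}\in S$, then $\mathbf{x}\cdot\mathbf{y}=0$ uniformly on $S$, so $S\subset H_{\mathbf{y}}$ and $\mathbf{y}\in C(S)$.

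Next, I would identify this intersection with the linear kernel of a suitable linear map, to extract both its subspace structure and its dimension at once. Pick a basis $\mathbf{x}_1,\ldots,\mathbf{x}_k$ of $\langle S\rangle$, where $k=\rk(S)$, and consider
\[
\varphi:\mathbb{F}_2^n\longrightarrow \mathbb{F}_2^k,\qquad \varphi(\mathbf{y})=(\mathbf{x}_1\cdot\mathbf{y},\ldots,\mathbf{x}_k\cdot\mathbf{y}).
\]
By bilinearity $\varphi$ is linear, and by the bilinearity argument again $\mathbf{y}\in\bigcap_{\mathbf{x}\in S}H_{\mathbf{x}}$ if and only if $\mathbf{y}\in\bigcap_{i=1}^{k}H_{\mathbf{x}_i}$, if and only if $\varphi(\mathbf{y})=\mathbf{0}$. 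Thus $C(S)=\ker\varphi$, which is already a linear subspace.

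Finally, I would compute the dimension by showing that $\varphi$ is surjective, so that rank--nullity yields $\dim C(S)=n-k=n-\rk(S)$. Surjectivity follows because the matrix of $\varphi$ with respect to the canonical bases has the $\mathbf{x}_i$ as rows, and these are linearly independent in $\mathbb{F}_2^n$ by construction, so the matrix has rank $k$. The only nontrivial step is this last one: one has to be careful that nondegeneracy of the pairing over $\mathbb{F}_2$ really does guarantee that linearly independent rows produce a surjective map onto $\mathbb{F}_2^k$. This is where I expect the main (though mild) subtlety to lie; everything else is a direct translation of the definitions.
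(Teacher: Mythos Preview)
Your proof is correct and follows essentially the same approach as the paper: both recognize that $\mathbf{0}\in S$ forces the constant value to be $0$, so $C(S)$ is the solution set of the homogeneous linear system $\{\mathbf{s}\cdot\mathbf{y}=0:\mathbf{s}\in S\}$. The paper leaves the dimension count implicit, whereas you spell out the rank--nullity argument via the map $\varphi$; your extra care about surjectivity is sound but not really a subtlety, since a $k\times n$ matrix with $k$ independent rows always has rank $k$.
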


Note that, still assuming that $\w{0}\in S$, we have then $C(C(S)) = \langle S \rangle$. If $\w{0}\notin S$, then it suffices to consider $S' = \w{s} + S$ for some $\w{s}\in S$. Taking now a look into the balancing set, the following properties are straightforward.

\begin{lemma}{(Properties.)}
Let $S$ be a nonempty Boolean set, $B(S)$ be as in the previous definition and let $S_1,S_2 \subset \mathbb{F}_2^n$ both $B$-balanced and nonempty, then:
\begin{itemize}
    \item[(i)] For all $A\subset B(S)$, $S$ is $A$-balanced.
    \item[(ii)] If $S_1$ and $S_2$ are such that $S_1\cap S_2 = \varnothing$, then $S_1\cup S_2$ is $B$-balanced.
    \item[(iii)] If $S_1\subset S_2$, then $S_2 \setminus S_1$ is $B$-balanced.
    \item[(iv)] $S_1\cap S_2$ is $B$-balanced if and only if $S_1\cup S_2$ is $B$-balanced.
    \item[(v)] $B(S) = B(\mathbb{F}_2^n \setminus S)$ for all $S\subset\mathbb{F}_2^n$.
    \item[(vi)] $B(\mathbb{F}_2^n) = \mathbb{F}_2^n\setminus \{\mathbf{0}\}.$
    \item[(vii)] $B(S) = B(\mathbf{s} + S)$ for all $\mathbf{s}\in\mathbb{F}_2^n$. (Invariance by translation).
\end{itemize}
\end{lemma}

We also have the following.

\begin{lemma}\label{lemaprop}
Let $S$ be a nonempty Boolean set:
\begin{itemize}
    \item[(viii)] Let $\mathbf{s} \in\f{2}{n}\setminus \{\w{0}\}$ such that $S = \mathbf{s} + S$ and $\mathbf{y} \in \mathbb{F}_2^n$ with $\mathbf{y} \cdot \mathbf{s} = 1$, then $\mathbf{y} \in B(S)$.
    \item[(ix)] Let $S$ be $B$-balanced, then $\langle S\rangle$ is $B$-balanced.
    \item[(x)] Let $S_1$ be $B_1$-balanced and $S_2$ be $B_2$-balanced, then $\langle S_1, S_2\rangle$---the vector space generated by $S_1\cup S_2$---is $(B_1 \cup B_2)$-balanced.
\end{itemize}
\end{lemma}

\begin{proof}
Property $(viii)$ follows from the fact that $\widehat{1_{\w{s}+S}}(\w{y}) = (-1)^{\w{s}\cdot\w{y}}\widehat{1_{S}}.$

For property $(ix)$, given $\w{y}\in B$, we know that there is an $\w{x}\in S$ such that $\w{y}\cdot\w{x}=1$, and by property $(viii)$ we get the result, as $\w{x} + \langle S\rangle = \langle S\rangle.$

Property $(x)$ follows from the same idea, as for any $i=1,2$; $\w{y}\in B_i$ implies that there is an $\w{x}\in S_i\subset\langle S_1\cup S_2\rangle$ such that $\w{y}\cdot\w{x}=1$.\hfill $\square$ 
\end{proof}

\begin{remark}\label{structure}
Given a nonzero Boolean function $f:\f{2}{n}\to\f{2}{}$ and let $S= \supp(f)$. Then, the constant set and the balancing set of $S$, and incidentally also the Fourier-Hadamard support of $f$, have the following structure.
\begin{enumerate}
\item Both $C(S)$ and $\widehat{f}^{-1}(|S|)$ are vector spaces, with $C(S) = \widehat{f}^{-1}(|S|)$ if $\w{0}\in S$.
\item If $\w{0}\in S$, then each of the sets $\widehat{f}^{-1}(z)$ with $z\in\mathbb{Z}$ is either empty or a union of disjoint cosets of $C(S)$, 
 this applies in particular to $B(S) = \widehat{f}^{-1}(0)$. If $r$ is the rank of $S$, then the dimension of $C(S)$ will be $n-r$ and thus we will have $2^r$ of these cosets.
 \item If $\w{0}\notin S$, taking $g(\w{x}) = f(\w{x}+\w{s})$ for some $\w{s}\in S$ we have that $\widehat{g}(\w{u}) = (-1)^{\w{s}\cdot\w{u}}\widehat{f}(\w{u}).$ This implies that now the sets $\widehat{f}^{-1}(z)\cup\widehat{f}^{-1}(-z)$ are the ones which are either empty or a union of cosets of $C(S)$, but the situation of $B(S)$ does not change.
\end{enumerate}
\end{remark}

Taking into consideration this remark, it makes sense to define the following concept.

\begin{definition}{(Balancing index.)}\label{balindex} Let $\varnothing \neq
S\subset\mathbb{F}_2^n$, then we define its balancing index to be:
$$
b(S) = \frac{|B(S)|}{|C(S)|}.
$$
This index is always an integer, and it corresponds to the amount of disjoint cosets of $C(S)$ that conform $B(S)$, as we have seen in Remark \ref{structure}.
\end{definition}

The balancing index is clearly invariant by isomorphism, but this can be taken even further

\begin{proposition}\label{isonoiso}
Let $S\subset \f{2}{n}$ be a Boolean set and  $\vp: \langle S\rangle\to \mathbb{F}_2^m$ a monomorphism (i.e., an injective linear function). Then $b(S) = b(\vp(S))$.
\end{proposition}

\begin{proof}
We have seen that both the constant and the balancing set are invariant by translation, so we will suppose that $S$ and $\vp(S)$ include the $\w{0}$ vector and that $\vp(\w{0}) = \w{0}$ without loss of generality. Let $r$ be the rank of $S$---and also of $\vp(S)$--and let $\w{s}_1,\ldots,\w{s}_r$ be independent elements of $S$, then it is clear that they conform a basis of $\langle S \rangle$, and that the vectors $\vp(\w{s}_1),\ldots,\vp(\w{s}_r)$ conform a basis of $\langle \vp(S)\rangle$. 

We also know that both the constant set and the balancing set can be computed as the sets of solutions to certain families of systems of equations of the form $\{\w{s}_i\cdot\w{x} = b_i\mid i = 1,\ldots,r\}$, where $\w{x}$ is the vector of unknowns. The vector whose $i$-th component is $b_i$ will be noted as $\w{b}\in\f{2}{r}$. If, for a certain $\w{b}\in\f{2}{r}$, the solutions to the previous system balance $S$, then the solutions to the system $\{\vp(\w{s}_i)\cdot\w{x} = b_i\mid i = 1,\ldots,r\}$ will also balance $\vp(S)$, and the same will happen in the opposite direction. Indeed, if we take any $\w{s}\in S$ such that $\w{s} = \sum_{i=1}^r \alpha_i\w{s}_i$ for some $\alpha_i \in\f{2}{}$, we have that $\vp(\w{s}) = \sum_{i=1}^r \alpha_i\vp(\w{s}_i)$. Let $\w{z}\in\f{2}{n}$ be a solution to the system $\{\w{s}_i\cdot\w{x} = b_i\mid i = 1,\ldots,r\}$ and $\w{z}'\in\f{2}{m}$ a solution to $\{\vp(\w{s}_i)\cdot\w{x} = b_i\mid i = 1,\ldots,r\}$, then
$$
\w{s}\cdot\w{z} = \left(\sum_{i=1}^r \alpha_i\w{s}_i\right)\cdot\w{z} = \bigoplus_{i=1}^r \alpha_i b_i = \left(\sum_{i=1}^r \alpha_i\vp(\w{s}_i)\right)\cdot\w{z}' = \vp(\w{s})\cdot\w{z}'.
$$

We saw in Remark \ref{structure} that the balancing sets of $S$ and $\vp(S)$ were composed of cosets of $C(S)$. Although the dimension of $C(\vp(S))$ does not have to be the same as that of $C(S)$, the amount of cosets that balance each of these sets is the same. To see this we just need to explicitly construct the bijection between the cosets of $C(S)$ and those of $C(\vp(S))$ that we have hinted at before. Each of these cosets can be assigned to the vector $\w{b}\in\f{2}{r}$ of independent terms in the system of equations whose solution is said coset. We just need to identify cosets of $C(S)$ and of $C(\vp(S))$ which are assigned to the same $\w{b}$.
\hfill $\square$ 
\end{proof}

This allows us, by taking $r = m$, to consider only the cases where $S$ is made of the zero vector, the $n$ vectors of the cannonical basis of $\f{2}{n}$ and $|S|-n-1$ other linear combinations of these vectors when studying certain properties. Indeed, let $S\subset\f{2}{n}$ be a Boolean set with rank $r\leq n$ and such that $\w{0}\in S$, and let $\w{s}_1,\ldots,\w{s}_r$ be independent elements of $S$. Then, we can consider the application $\vp:\langle S\rangle\to\f{2}{r}$ linearly determined by $\vp(\w{s}_i) = \w{e}_i$ for $i=1,\ldots,r$, where $\w{e}_i$ are the elements of the standard basis in $\f{2}{r}$. As this application satisfies the conditions presented above, we know that $b(S) = b(\vp(S))$.

\end{section}

\begin{section}{Fully balanced sets}\label{FBsect}

In this section, we will define the notion of fully balanced sets and take a look into its relation with minimal distance codewords in a Reed--Muller code. We will begin by recalling the following result that we can find, for instance, in \cite{bfc}.

\begin{proposition}\label{hadvecprop} Let $E$ be a vector subspace of $\mathbb{F}_2^n$ and $1_E$ its indicator function, then:
$$
\widehat{1}_E = |E|1_{E^{\perp}}.
$$
\end{proposition}

In particular, this implies that $C(E) = E^{\perp}$ and $B(E) = \f{2}{n} \setminus E^{\perp}$. Another interesting remark is that if $r = \dim(E)$, then $b(E) = 2^r-1$. Moreover, we will always have $B(S)\cup C(S) = \mathbb{F}_2^n$, which is the property we will use for our following definition.

\begin{definition}{(Fully balanced.)}
We say that a nonempty set $S\subset\mathbb{F}_2^n$ is fully balanced if $B(S)\cup C(S) = \mathbb{F}_2^n$.
\end{definition}

\begin{remark}
Of course, this property is equivalent to saying that $\widehat{1_S}$ is valued in $\{-|S|,0,|S|\}$, but there are actually many other ways to define it. For instance, if $r= \rk(S)$, then $S$ is fully balanced if $b(S) = 2^r-1$, as $|C(S)| = 2^{n-r}$ and $|B(S)| = 2^n -2^{n-r}$ due to Definition \ref{balindex}. However, the more intuitive one is that a nonempty $S$ is fully balanced if for every $\w{y}\in\f{2}{n}\setminus\{\w{0}\}$ we have that $|S\cap H_{\w{y}}|$ is either $|S|$, $0$ (these two cases corresponding to $y\in C(S)$ by Definition \ref{constset}) or $|S|/2$ (the case where $\w{y}$ balances $S$ using Definition \ref{balset}).
\end{remark}

To answer the question of whether there are any other fully balanced sets apart from affine spaces, we turn to \cite{mac}, and more particularly to Lemma 6 in its Chapter 13. The result we present here is just the aforementioned one, but we have rewritten it so we do not explicitly assume that the size of $S$ is a power of $2$, which is one of the premises set in \cite{mac}. This statement is not actually used in their proof, so our result is not fundamentally new, but it seems important to know that the result is more general that as stated in \cite{mac} (and we give an original and simpler proof).

\begin{theorem}\label{FBsetsTh}
Let $S\subset\f{2}{n}$ be a nonempty Boolean set and $\w{1}_S$ its indicator vector, then the following statements are equivalent.
\begin{itemize}
\item[(i)] $S$ is fully balanced.
\item[(ii)] $S$ is an affine space.
\item[(iii)] $\w{1}_S$ is a minimum weight codeword in $\mathcal{R}(r,n)$ for some $r$.
\end{itemize}
\end{theorem}

\begin{proof}
The equivalence $(ii)\iff(iii)$ can be found in \cite{mac}, and we have already taken a look into $(ii)\implies(i)$.

\noindent The implication $(i)\implies(ii)$ is also implicitly in \cite[Chapter~13]{mac}, when we read the proof of its Lemma $6$ due to Rothschild and Van Lint; indeed, the proof given in \cite{mac} does not use in fact that $|S|$ is a power of two. We will instead present another proof of this implication using the Fourier--Hadamard transform properties.

\noindent Let $1_S$ be the indicator function of $S$. As $S$ is fully balanced, we know that $\widehat{1_S}(\w{u})$ is either $0$, $|S|$ or $-|S|$. We will suppose without loss of generality that $\w{0}\in S$, as both properties (being fully balanced and being an affine space) are preserved by translation. As we have seen, in this situation the value $-|S|$ is not possible, and $C(S)$ is the vector space of those $\w{u}$ such that $\widehat{1_S}(\w{u}) = |S|$, so:
$$
\widehat{1_S} = |S|\, 1_{C(S)},
$$
where $1_{C(S)}$ is the indicator function of $C(S)$. Using now the inverse Fourier--Hadamard transform formula, we have:
$$
\widehat{\widehat{1_S}} = 2^n 1_S = |S|\, \widehat{1}_{C(S)} = |S| |C(S)|\, 1_{C(S)^{\perp}}
$$
making also use of Proposition \ref{hadvecprop}. As $1_S$ is a Boolean function, then $|S| |C(S)| = 2^n$ and $S = C(S)^{\perp}$, so $S$ is a vector space.
\hfill $\square$ 
\end{proof}

\begin{remark}
In the proof by Rothschild and Van Lint, they suppose that $|S|$ is a power of two and proceed by induction on $n$. Their proof can also be seen in terms of the Fourier--Hadamard transform, so we will briefly present it this way to show the differences between both approaches. For $n = 2$, the result is trivial, so we will suppose the result to be true for $n\leq k-1$ and prove it for $n=k$.

\noindent Let $1_S$ be the indicator function of $S$, we know that $1_S$ is fully balanced if and only if $\widehat{1_S}(\w{u})\in\{0,\pm|S|\}$ for all $\w{u}$. If there is $\w{s}\in C(S)$ which is not zero, then there is a hyperplane $H\subset\f{2}{k}$ such that $S\subseteq H$ (this $H$ is $\{\w{0},\w{s}\}^\perp$ if $\widehat{1_S}(\w{s}) = |S|$ and its complement if $\widehat{1_S}(\w{s}) = -|S|$). Taking now any hyperplane $X$ of $H$, we know that $X = H \cap H'$ for some hyperplane $H'$ in $\f{2}{n}$, and thus $S\cap X = S\cap H'$. This implies that $|S\cap X|$ verifies the induction hypothesis and we have our result. If $C(S) = \{\w{0}\}$, then Rothschild and Van Lint show that $|S| = 2^n$, and thus $S = \f{2}{n}$ by a geometrical argument counting hyperplanes, but it is simpler to do it via an equivalent argument using Parseval's relation. Using $C(S) = \{\w{0}\}$ we know that
$$
\sum_{\w{u}\in\f{2}{n}} \left(\widehat{1_S}(\w{u})\right)^2 = \left(\widehat{1_S}(\w{0})\right)^2 = |S|^2,
$$
but it is also equal to $2^n|S|$ due to Parseval's relation, so $|S|$ must be either $0$ or $2^n$.
\end{remark}

\end{section}

\begin{section}{Some Fourier--Hadamard and Walsh supports}

We now move on to analysing the Fourier--Hadamard and Walsh supports of functions that have not yet been studied from the viewpoint of the Fourier support (recall that, for any nonzero function, $B(S)$ is the complement of the Fourier--Hadamard support). A common problem that we deal with in quantum computing and that has implications in quantum complexity theory is that of distinguishing classes of functions. Indeed, if we can isolate two classes of functions (for instance, balanced and constant functions) that cannot be distinguished efficiently in one of the classical models of computation and we can distinguish them efficiently in the quantum model, the result would have interesting implications. We will show in Section \ref{quant} that the Walsh support of vectorial functions is tied to the Fourier support of the Boolean or pseudo-Boolean functions determined by their images. For that reason, knowing the Fourier support of Boolean functions is paramount in applying the $\GPK$ algorithm to distinguish classes of vectorial functions. Few results are known regarding possible Walsh supports: we know that $\f{2}{n}$ can be a Walsh support (for instance of any function having odd Hamming weight), as well as any singleton $\{\w{a}\}$ (in this latter case, this is equivalent to the fact that the function is affine). A set of the form $\f{2}{n}\setminus \{\w{a}\}$ can also be a Walsh support, a study of which can be found in \cite{car1} together with a general review in what is already known on the subject. We study now the Fourier support of a class of functions that has never been studied:

\begin{theorem}{(Balancing independent sets.)}\label{indepth}
Let $S\subset\f{2}{n}$ with $|S|$ even, $\mathbf{0}\in S$ and $r = \rk(S) = |S|-1$. Then 
$$
b(S) = \binom{r}{(r+1)/2},
$$ 
and $B(S)$ is the disjoint union of $\binom{r}{(r+1)/2}$ affine spaces of dimension $n-r$ whose underlying vector space is $C(S) = \langle S \rangle^{\perp}$.
\end{theorem}

\begin{proof}
Let $\w{s}_i\in S$; $i=1,\ldots,r$, be the nonzero elements of $S$. For a certain $\w{x}\in\f{2}{n}$ to balance $S$, it must satisfy that $\w{x}\cdot\w{s}_i = 0$ for $(r-1)/2$ of the nonzero elements of $S$ and $\w{x}\cdot\w{s}_i = 1$ for the remaining $(r+1)/2$. Let $B_{r,t}$ be the set of vectors of weight $t$ in $\f{2}{r}$, and take $t = (r+1)/2$. We know that, for every $\w{x}\in\f{2}{n}$ that balances $S$, the vector obtained after computing the $r$ possible $\w{x}\cdot\w{s}_i$ values must be in $B_{r,t}$. Next, for each element $\mathbf{b}\in B_{r,t}$, we will consider the system of equations given by $\{\mathbf{s}_i\cdot\w{x} = b_i\mid i = 1,\ldots, r\}$, where $b_i$ is the $i$-th component of $\w{b}$ and $\w{x}$ is the vector of unknowns. It is obvious that the solution to any of these systems balances $S$, as $w(\w{b}) = t$. Furthermore, any vector that balances $S$ must be a solution to one of these systems. The next step will be to analyse the solutions to these systems of equations. As the $r$ non-trivial vectors of $S$ are independent, we know that $n \geq r$, and in particular, the rank of any of the systems will be $r$ and they will always have a subspace of dimension $n-r$ as solution. The final step is simply to remark that the pairwise intersections of such subspaces are empty and that there are precisely 
$$
\displaystyle\binom{r}{(r+1)/2}
$$ 
possible such systems of equations.
\hfill $\square$ 
\end{proof}

In particular, we have that $b(S) = \displaystyle\binom{r}{(r+1)/2}.$ Let us see what this implies in terms of the Fourier--Hadamard and Walsh supports.

\begin{remark}
Let $f:\f{2}{n}\to\f{2}{}$ be a Boolean function whose support $S = \supp(f)$ satisfies the conditions of Theorem \ref{indepth}. Then, if $f$ is not balanced---which is always the case if $n>3$---the Fourier--Hadamard and Walsh supports are $\supp(\widehat{f}) = \supp(W_f) = \f{2}{n}\setminus B(S)$, and thus we have supports of size:
$$
2^n-\displaystyle\binom{r}{(r+1)/2}2^{n-r}.
$$

If $f$ is balanced then the Walsh support is of size 
$$
2^n-\displaystyle\binom{r}{(r+1)/2}2^{n-r}-1.
$$


\end{remark}

The next step in our quest could be to consider the next general situation. Let us study the case where all nontrivial elements of $S$ are independent except for one.

\begin{proposition}\label{indepmas1pr}
Let $S\subset \mathbb{F}_2^n$ with $|S|$ even, $\mathbf{0}\in S$ and $r = \rk(S) = |S| - 2$. Denoting by $\mathbf{s}_1,\ldots, \mathbf{s}_{r}$ $r$ independent elements of $S$ and by $\mathbf{s}$ the remaining nonzero element of $S$ such that
$$
\mathbf{s} = \sum_{i=1}^{r} \alpha_i \mathbf{s}_i, \text{ where } \alpha_i\in\mathbb{F}_2 \text{ for all } i\in\{1\ldots,r\},
$$
let $k = \displaystyle\sum_{i=1}^{r} \alpha_i$, where the sum is calculated in $\mathbb{Z}$, then:
$$
b(S) =  \begin{cases}
0 & \text{ if } \varphi_1(k) > \varphi_2(k) \\
\displaystyle\sum_{i=\vp_1(k)}^{\vp_2(k)} \binom{k}{i} \ \binom{r-k}{(r/2)+e(i)-i} & \text{ otherwise,}
\end{cases}
$$
where
$$
\vp(k) = (\vp_1(k),\vp_2(k)) = \begin{cases}
(0, k) & \text{ if } k < r/2\\
(1,k) & \text{ if } k = r/2\\
\displaystyle \Big(k - \frac{r}{2} + e \left( k-\frac{r}{2} \right), \frac{r}{2}+ e \left( \frac{r}{2}+1 \right)\Big) & \text{ if } k > r/2,
\end{cases}
$$
and
$$
e(x) = \frac{1+(-1)^x}{2}.
$$
Once again, $B(S)$ will be a disjoint union of $b(S)$ affine spaces with $C(S)$ as their underlying vector space.
\end{proposition}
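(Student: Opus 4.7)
The plan is to translate the balancing condition into a combinatorial count over tuples $\mathbf{b} = (b_1, \ldots, b_r) \in \mathbb{F}_2^r$, exploiting Theorem 1. Since $\mathbf{s}_1, \ldots, \mathbf{s}_r$ are independent, every $\mathbf{b}$ yields a nonempty affine space of solutions $\mathbf{y}$ to the system $\{\mathbf{s}_j^* = b_j \mid j = 1, \ldots, r\}$, all sharing $C(S)$ as underlying vector space, and by Theorem 1 the number $b(S)$ is exactly the number of such $\mathbf{b}$ whose affine space balances $S$. For any $\mathbf{y}$ in this space, $\mathbf{y} \cdot \mathbf{s}_j = b_j$ and $\mathbf{y} \cdot \mathbf{s} = \sum_j \alpha_j b_j \pmod{2}$, so the balancing condition depends only on $\mathbf{b}$.

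Since $\#S = r+2$ is even and $\mathbf{y} \cdot \mathbf{0} = 0$, the set $S$ is $\mathbf{y}$-balanced if and only if exactly $(r+2)/2$ of the $r+1$ nontrivial elements pair to $1$ with $\mathbf{y}$. I would parameterise by $i$, the number of indices $j$ with $\alpha_j = 1$ and $b_j = 1$ (so $0 \le i \le k$), and $j'$, the analogous count for indices with $\alpha_j = 0$ (so $0 \le j' \le r - k$). Then $\mathbf{y} \cdot \mathbf{s} \equiv i \pmod{2}$, and the total number of nontrivial elements pairing to $1$ is $i + j' + (i \bmod 2)$. Setting this equal to $(r+2)/2$ and using $(i \bmod 2) = 1 - e(i)$ forces $j' = r/2 + e(i) - i$, which is exactly the expression appearing in the statement.

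For each admissible $i$, the number of tuples $\mathbf{b}$ realising that profile is $\binom{k}{i}\binom{r-k}{r/2 + e(i) - i}$, with the usual convention that a binomial vanishes whenever its lower argument is negative or exceeds its upper argument. Summing over $i \in \{0, 1, \ldots, k\}$ then gives $b(S)$. To match the statement I would pin down the effective range of $i$ by imposing $0 \le r/2 + e(i) - i \le r - k$, i.e. $k - r/2 + e(i) \le i \le r/2 + e(i)$. A split into the three regimes $k < r/2$, $k = r/2$, $k > r/2$, together with a parity analysis of $i$ (and, in the third regime, of $k - r/2$), collapses these bounds to $\vp_1(k)$ and $\vp_2(k)$; whenever the resulting interval is empty, i.e. $\vp_1(k) > \vp_2(k)$, no $\mathbf{b}$ satisfies the balancing condition, so $b(S) = 0$.

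The main obstacle will be precisely this parity bookkeeping: the factor $e(i)$ couples the admissibility of a given $i$ with its own parity, and in the regime $k > r/2$ the lower endpoint also depends on the parity of $k - r/2$. Managing all four combinations cleanly, and verifying that in each regime the stated endpoints are tight, neither dropping admissible values of $i$ nor introducing spurious terms that would mask the degenerate case, is where most of the care has to go. Direct checks on small instances such as $r = 2$, $k = 2$ (where the formula should give $3$) and $r = 4$, $k = 4$ (where it should give $0$, since no $\mathbf{b} \in \mathbb{F}_2^4$ makes $i + j' + (i \bmod 2) = 3$) provide a useful sanity test throughout.
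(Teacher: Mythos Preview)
Your proposal is correct and follows essentially the same route as the paper: both reduce $b(S)$ to counting tuples $\mathbf{b}\in\mathbb{F}_2^r$ for which the associated $\mathbf{y}$ balances $S$, parametrise by the number $i$ of ones among the $k$ positions supporting $\mathbf{s}$, derive the term $\binom{k}{i}\binom{r-k}{(r/2)+e(i)-i}$, and then isolate the admissible range of $i$ via the parity analysis encoded in $\varphi$. The only cosmetic difference is that the paper first normalises to $n=r$ with $\mathbf{s}_j=\mathbf{e}_j$ and $\mathbf{s}=1^k0^{r-k}$, whereas you work directly through Theorem~1; the combinatorics and the endpoint bookkeeping are identical.
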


\begin{proof}
As announced after Proposition \ref{isonoiso}, we will consider $n = r$, $\mathbf{e}_j\in S$ for each $j=1,\ldots,r$ and $\mathbf{s} = 1^k \ 0^{r-k}$ without loss of generality. In this situation, $k$ is the weight of $\mathbf{s}$. We know that $b(S)$ corresponds to the amount of affine spaces with $C(S)$ as their underlying vector space that balance $S$, but since we have $C(S) = \langle S\rangle^{\perp} = \{\w{0}\}$, each of these affine spaces consists of a single vector and $|B(S)| = b(S)$. Thus, we only need to count the vectors $\w{b}\in\f{2}{r}$ that balance $S$. Let $\w{b}\in\f{2}{r}$ be any such vector, then $\w{b}\cdot\w{s}_j = \w{b}\cdot\w{e}_j = b_j$ for every $j=1,\ldots,r$ and
$$\w{b}\cdot \w{s} = \sum_{j=1}^k b_j,$$
where $b_j$ is the $j$-th component of $\w{b}$. It is clear that for $\mathbf{b}$ to balance $S$ it must either have weight $r/2$ and have an odd amount of ones among the first $k$ positions, or have weight $(r/2)+1$ and have an even amount of ones among the first $k$ positions. At this point, the only thing that remains is to count such $\mathbf{b}$ vectors. Let $i$ be the number of ones among the first $k$ positions, then there are exactly $\binom{k}{i}$ ways for them to be distributed. If $i$ is odd, then we want $\wt(\w{b}) = r/2$, so for each choice of $b_1,\ldots,b_k$ 
$$
\binom{r-k}{(r/2)-i}
$$
ways of choosing the remaining ones. If $i$ is even, then we want $\wt(\mathbf{b}) = (r/2) +1$, and then the remaining combinations will be
$$
\binom{r-k}{(r/2)+1-i}.
$$

A compact way to consider both possibilities at the same time is
$$
\binom{r-k}{(r/2)+e(i)-i},
$$
so the total number of combinations will be:
$$
\sum_i \binom{k}{i} \binom{r-k}{(r/2)+e(i)-i}.
$$

The only thing left to do is to analyse which are the possible values for $i$, which is the task of the $\vp$ function. Given $i\leq k \leq r$, the previous expression is nonzero if and only if $i\leq k$ and $r/2+e(i)-i\leq r-k$, that is, $i\geq k-r/2+e(i)$. If $k < r/2$, then it is trivial that $i$ can range between $0$ and $k$. If $k = r/2$, then it is almost the same situation, with the slight difference that $i$ cannot be $0$, as then there would have to be $(r/2) + 1$ ones in the last $r/2$ positions. However, if $k > r/2$, then the maximum range we can achieve is from $k-(r/2)$ to $r/2$. This is not always possible, as if $r/2$ is odd, then $i = (r/2)+1$ is even and it would not balance our set. The same happens with our lower bound and the parity of $k-(r/2)$. If it were even, then it would be impossible to balance our set with $i = k - (r/2)$, so we must adjust the limits of our range in the $\vp$ function accordingly.
\hfill $\square$ 
\end{proof}

As we can see, the formulae get incredibly complicated when the set lacks structure.

\begin{remark}
Let $f:\f{2}{n}\to\f{2}{}$ be a Boolean function such that its support $S = \supp(f)$ satisfies the conditions of Proposition \ref{indepmas1pr}. Then, if $f$ is not balanced---which again is always the case if $n>3$--- we have that both the Fourier--Hadamard and Walsh supports are a disjoint union of $b(S)$ affine spaces with $C(S)$ as their underlying vector space. Therefore, their total size will be $2^n - b(S) 2^{n-r}$, where $b(S)$ is as shown in Proposition \ref{indepmas1pr} and $r$ is the rank of $S$.
\end{remark}

We will focus now on functions whose support has a certain structure. There are many known ways of determining the Fourier--Hadamard and Walsh supports of Boolean functions by decomposing them. A summary of these situations can be found in \cite{bfc}, where, in particular, we can find that, given two pseudo-Boolean functions, $\psi$ and $\vp$, in $\f{2}{n}$, then, $\widehat{\vp\otimes\psi} = \widehat{\vp}\times\widehat{\psi}.$ Also, we know that $\widehat{\vp\times\psi} = \widehat{\vp}\otimes\widehat{\psi}/2^n,$ where $\otimes$ stands here for the Kronecker product. We will proceed to do a similar thing, but using a different construction based on the $0$-kernel of the function. We will first highlight the idea with an example.

\begin{remark}
Let us consider a nonempty $S\subset \mathbb{F}_2^n$ for which there is an $\mathbf{s}\in \mathbb{F}_2^n$ such that $\mathbf{s} + S = S$ and $\rk(S) = r$. Then, any $\w{y}\in \mathbb{F}_2^n \setminus H_{\mathbf{s}}$ balances $S$, but for any $\w{y}\in H_{\w{s}}$ to balance $S$ we would need it to balance $S/\langle \w{s}\rangle$. In particular, if $|S|$ is not a multiple of $4$, then $B(S) = \mathbb{F}_2^n \setminus H_{\mathbf{s}}$ and $b(S) = 2^{r-1}.$
\end{remark}

If we note as $f$ the indicator function of $S$, then what we have here is an $\w{s}$ such that $D_{\w{s}}(f) = 0$, the set of all the elements that fulfill this property is known as the $0$-kernel, $\mathcal{E}_{0}(f) = \{\w{a}\in\f{2}{n}\mid D_{\w{a}}(f) = 0\}$, and we will use it to generalise the previous result. Let us recall some of its properties first:

\begin{proposition}{(Properties.)}
Let $f:\f{2}{n}\to\f{2}{}$ be a Boolean function and $S=\supp(f)$:
\begin{itemize}
    \item[(i)] $\mathcal{E}_{0}(f)$ is a vector subspace.  
    \item[(ii)] If $\mathbf{0}\in S$, then $\mathcal{E}_0(f)\subset S$.   
    \item[(iii)] $\mathcal{E}_0(f(\w{x})) = \mathcal{E}_0\big(f(\w{x}+\w{a})\big)$ for all $\mathbf{a}\in\f{2}{n}$. 
    \item[(iv)] $\mathcal{E}_0(f) = \displaystyle\bigcap_{\mathbf{x}\in S} (\mathbf{x}+S).$    
\end{itemize}
\end{proposition}

What we are trying to do here is to construct $B(S)$ from both $E$ and the balancing set of the set of classes $\mathbf{x} + E$. So let us delve a little deeper into this idea.

\begin{definition}{(Classes modulo $\mathcal{E}_0$.)}
Let $S\subset\mathbb{F}_2^n$ be nonempty, $f$ its indicator function and $E= \mathcal{E}_0(f)$. We will consider the set of classes modulo $E$, $S / E = \{\mathbf{x} + E \mid \mathbf{x}\in S\}$ and define the balancing set of $S/E$ as:
$$
B(S/E) = B(S) \cap H, \text{ where } H = \displaystyle\bigcap_{\mathbf{s}\in E} H_{\mathbf{s}}.
$$
\end{definition}

Annoyingly, this does not help us much, since we use $B(S)$ as part of the definition, but the next result will give us an alternative way to compute $B(S/E)$.

\begin{lemma}\label{balcll}
Let $S\subset\mathbb{F}_2^n$ be nonempty, $f$ its indicator function and $E= \mathcal{E}_0(f)$, and let $S_E$ be a complete set of representatives of $S/E$. Then:
$$
B(S/E) = B(S_E)\cap H, \text{ where } H = \displaystyle\bigcap_{\mathbf{s}\in E} H_{\mathbf{s}}.
$$
\end{lemma}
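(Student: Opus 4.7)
The plan is to unpack the definition $B(S/F) = B(S) \cap H$ and show directly that, when intersected with $H$, the balancing sets of $S$ and $S_F$ coincide. In other words, I want to establish that for every $\mathbf{y}\in H$, the vector $\mathbf{y}$ balances $S$ if and only if it balances $S_F$; intersecting both sides with $H$ then yields the claimed equality.

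First I would fix $\mathbf{y}\in H$ and exploit the defining property of $H$: since $\mathbf{y}\cdot\mathbf{s}=0$ for every $\mathbf{s}\in F$, bilinearity of the pairing gives
$$
\mathbf{y}\cdot(\mathbf{x}+\mathbf{s}) \;=\; \mathbf{y}\cdot\mathbf{x} \qquad \text{for all } \mathbf{x}\in S,\; \mathbf{s}\in F.
$$
Thus the value of $\mathbf{y}\cdot(\cdot)$ is constant on each class $\mathbf{x}+F$ of $S/F$. Using that these classes partition $S$ into $\#S_F$ blocks, each of size $\#F = 2^f$ (which is the content of Property (xiv) applied to the fixing set), I get the two crucial countings
$$
\#\bigl(S\cap H_{\mathbf{y}}\bigr) \;=\; 2^{f}\,\#\bigl(S_F\cap H_{\mathbf{y}}\bigr), \qquad \#S \;=\; 2^{f}\,\#S_F.
$$

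Next I would combine these two identities: $S$ is $\mathbf{y}$-balanced iff $\#(S\cap H_{\mathbf{y}}) = \#S/2$, and dividing both sides by $2^{f}$ this is equivalent to $\#(S_F\cap H_{\mathbf{y}}) = \#S_F/2$, i.e. to $S_F$ being $\mathbf{y}$-balanced. Hence for $\mathbf{y}\in H$ we have $\mathbf{y}\in B(S) \Leftrightarrow \mathbf{y}\in B(S_F)$, so $B(S)\cap H = B(S_F)\cap H$, and by the definition this is exactly $B(S/F) = B(S_F)\cap H$.

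The only subtle point, and the one I would be most careful with, is verifying that $S_F$ is well-defined as a set of representatives and that the counts above are independent of the choice of representatives; this relies on $F$ being a subgroup (Property (xi)) so that the classes $\mathbf{x}+F$ really do partition $S$ into equal-sized blocks. Everything else is a routine application of bilinearity together with the counting identity $\#S = 2^{f}\#S_F$, so I do not expect any genuine obstacle beyond keeping this bookkeeping straight.
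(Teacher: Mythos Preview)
Your proof is correct and follows essentially the same approach as the paper: fix $\mathbf{y}\in H$, observe that the pairing $\mathbf{y}\cdot(\cdot)$ is constant on each coset $\mathbf{x}+F$, and conclude that balancing $S$ is equivalent to balancing any complete set of representatives. The paper states this more tersely, while you spell out the counting identities $\#(S\cap H_{\mathbf{y}})=2^{f}\,\#(S_F\cap H_{\mathbf{y}})$ and $\#S=2^{f}\,\#S_F$ explicitly; the only quibble is that the equal-sized partition follows from Property~(xi) (that $F$ is a subspace) rather than Property~(xiv), but the argument is sound.
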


\begin{proof}
Let $\mathbf{y}\in H$, stating that $\mathbf{y}$ balances $S$ is equivalent to stating that it balances the classes, as $\mathbf{y}\cdot \mathbf{s} = 0$ for every $\mathbf{s}\in F$ and thus $\mathbf{y}\cdot \mathbf{x}_1 = \mathbf{y}\cdot \mathbf{x}_2$ for $\mathbf{x}_1,\mathbf{x'}_1$ in the same class. The result follows from the fact that, in this situation, balancing a complete set of representatives is the same as balancing the classes.
\hfill $\square$
\end{proof}

Now the next result becomes obvious.

\begin{theorem}\label{cadena3th}
Let $S\subset \mathbb{F}_2^n$ be nonempty, $f$ its indicator function and $E= \mathcal{E}_0(f)$, with $ k= \dim(E)$ and $r = \rk(S)$. Then
$$
B(S) = \big( \mathbb{F}_2^n\setminus H \big) \sqcup B(S/H),
$$
where $H =\bigcap_{\mathbf{s}\in E} H_{\mathbf{s}}$ and $\sqcup$ stands for the disjoint union. Furthermore, $b(S) = 2^{r-k}(2^k-1) + b(S_E)$, where $S_E$ is a complete set of representatives of $S/E$.
\end{theorem}

\begin{proof}
As every element of $\mathbb{F}_2^n\setminus H$ balances $S$, the result just follows from:
$$
B(S) = \big[ B(S)\cap \left(\mathbb{F}_2^n\setminus H \right) \big] \sqcup \big[ B(S)\cap H \big].
$$

For the first part, $\left(B(S)\cap \left(\mathbb{F}_2^n\setminus H\right)\right) = \mathbb{F}_2^n\setminus H$, while the second is just the definition of $B(S/E)$. Regarding $b(S)$, the $2^{r-k}(2^k-1)$ part comes from $\mathbb{F}_2^n\setminus H$. For the $b(S_E)$ part, if we consider without loss of generality $\mathbf{0}\in S$, $n=r$, $\mathbf{e}_1,\ldots,\mathbf{e}_k \in E$ and $\mathbf{e}_{k+1},\ldots,\mathbf{e}_r \in S$, where the $\w{e}_i$ are the vectors of the canonical base, then each of the vectors in $b(S_E)$ comes from a compatible system of equations of rank $r$, and thus there will be $b(S_E)$ affine spaces in $B(S/H)$.
\hfill $\square$
\end{proof}

\begin{corollary}
Let $f:\f{2}{n}\to\f{2}{}$ such that $\supp (f) = S$ is as in Theorem \ref{cadena3th}, then $\supp(\widehat{f}) = H \cap \supp(\widehat{f}_{S/H})$, where $f_{S/H}$ is the indicator function of $S/H$.
\end{corollary}

\end{section}

\begin{section}{The Fourier--Hadamard Transform for multisets}

Let us devote some time to consider the Fourier--Hadamard transform for multisets, as it will become useful for analysing the Walsh transform of vectorial functions $F:\f{2}{n}\to\f{2}{m}$ through their image multisets. The main motivation for doing so is to define the concept of fully balanced function and apply the already presented results to analyse what the $\GPK$ has to offer. For a general review on multisets we refer the reader to \cite{syro}. As we will see, most of the results we have presented remain unchanged, so we will mostly just give a small sketch of the proofs. However, we do present some new results. In particular, we answer the question of determining the possible Fourier--Hadamard supports that a pseudo-Boolean function can have using the constant and balancing sets of multisets.

\begin{definition}{(Balanced multiset.)}
Let $M = (\f{2}{n},m)$ be a nonempty Boolean multiset. We say that $\mathbf{y}\in \mathbb{F}_2^n$ balances $M$ if:
$$
\widehat{m}(\w{y}) = \sum_{\mathbf{x} \in S_M} m(\mathbf{x}) \cdot (-1)^{\mathbf{x}\cdot \mathbf{y}} = 0,
$$
and we say that it makes $M$ constant if:
$$
\left|\widehat{m}(\w{y})\right| = \left|\sum_{\mathbf{x}\in S_M} m(\mathbf{x}) \cdot (-1)^{\mathbf{x}\cdot \mathbf{y}}\right| = \displaystyle\sum_{\mathbf{x} \in S_M} m(x) = |M|,
$$
where $|M| = \displaystyle\sum_{\mathbf{x} \in S_M} m(x)$ is the cardinality of $M$. We define $C(M)$, $B(M)$ and $b(M)$ analogously.
\end{definition}

In order to solve the problem of the constant set, we only need the following result.

\begin{lemma}
Let $M$ be a Boolean multiset, then $C(M) = C(S_M).$
\end{lemma}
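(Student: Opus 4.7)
The plan is to prove both inclusions directly from the defining equality for $\mathbf{y}\in C(M)$, using the triangle inequality on a sum of real numbers. Since $m(\mathbf{x})>0$ for every $\mathbf{x}\in S_M$ by definition of the support, the terms $m(\mathbf{x})(-1)^{\mathbf{x}\cdot\mathbf{y}}$ are nonzero real numbers whose signs are dictated solely by $(-1)^{\mathbf{x}\cdot\mathbf{y}}$. This reduces the problem to the well-known rigidity statement that $|\sum a_i| = \sum |a_i|$ for reals $a_i$ forces all nonzero $a_i$ to share the same sign.

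For the inclusion $C(S_M)\subset C(M)$, I would fix $\mathbf{y}\in C(S_M)$ and recall that by the Characterization proposition this means there exists a common value $\varepsilon\in\{+1,-1\}$ with $(-1)^{\mathbf{x}\cdot\mathbf{y}} = \varepsilon$ for every $\mathbf{x}\in S_M$. Substituting in the multiset sum,
$$
\sum_{\mathbf{x}\in S_M} m(\mathbf{x})(-1)^{\mathbf{x}\cdot\mathbf{y}} \;=\; \varepsilon \sum_{\mathbf{x}\in S_M} m(\mathbf{x}),
$$
whose absolute value is exactly $\sum_{\mathbf{x}\in S_M} m(\mathbf{x})$, so $\mathbf{y}\in C(M)$.

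For the reverse inclusion $C(M)\subset C(S_M)$, I would fix $\mathbf{y}\in C(M)$ and apply the triangle inequality:
$$
\left|\sum_{\mathbf{x}\in S_M} m(\mathbf{x})(-1)^{\mathbf{x}\cdot\mathbf{y}}\right| \;\leq\; \sum_{\mathbf{x}\in S_M} m(\mathbf{x})\,\bigl|(-1)^{\mathbf{x}\cdot\mathbf{y}}\bigr| \;=\; \sum_{\mathbf{x}\in S_M} m(\mathbf{x}).
$$
By hypothesis this inequality is an equality. Because each coefficient $m(\mathbf{x})$ is strictly positive on $S_M$, equality forces all the signs $(-1)^{\mathbf{x}\cdot\mathbf{y}}$ to coincide on $S_M$, which is precisely the condition $\mathbf{y}\in C(S_M)$ by the Characterization proposition again.

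There is essentially no obstacle: the only subtlety is keeping track of the fact that zero multiplicities have been excluded from the sum via the support $S_M$, so the equality case of the triangle inequality yields a genuine constancy statement rather than merely agreement on a subset. Once that is observed, the argument is just the standard equality case of $|\sum a_i|\leq\sum|a_i|$ for real numbers.
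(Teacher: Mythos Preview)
Your proof is correct and follows essentially the same approach as the paper's, which simply asserts in one line that $\mathbf{y}\in C(M)$ forces $\mathbf{y}\cdot\mathbf{x}$ to be constant on $S_M$. You have made that assertion rigorous by spelling out the equality case of the triangle inequality and by handling the easy reverse inclusion explicitly.
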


We will now present results analogous to those of Section \ref{basicresults} for the balancing problem.

\begin{proposition}
Let $M=(\f{2}{n},m)$ be a nonempty Boolean multiset and $\mathbf{s}\in\mathbb{F}_2^n$. Let $\mathbf{s} + M = (\f{2}{n},m')$ where $m'(\mathbf{x}) = m(\mathbf{s}+\mathbf{x})$, then $B(M) = B(\mathbf{s} + M).$
\end{proposition}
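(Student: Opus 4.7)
The plan is to mimic the proof of property (vii) from Proposition~3, using the characterisation of balancing as a vanishing exponential sum (which is built into the very definition of balancing for multisets). As before, the key observation is that translating a set/multiset merely introduces a global sign $(-1)^{\mathbf{s}\cdot\mathbf{y}}$ in the exponential sum, so it cannot change whether the sum is zero. Moreover, since $M = \mathbf{s} + (\mathbf{s} + M)$ (because $\mathbf{s} + \mathbf{s} = \mathbf{0}$), it suffices to prove one inclusion.

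First I would spell out the support of $\mathbf{s} + M$: since $m'(\mathbf{x}) = m(\mathbf{s}+\mathbf{x})$, we have $S_{\mathbf{s}+M} = \mathbf{s} + S_M$. Next, I would fix $\mathbf{y} \in B(M)$ and compute
$$
\sum_{\mathbf{x}\in S_{\mathbf{s}+M}} m'(\mathbf{x})\,(-1)^{\mathbf{x}\cdot \mathbf{y}} = \sum_{\mathbf{x}\in \mathbf{s}+S_M} m(\mathbf{s}+\mathbf{x})\,(-1)^{\mathbf{x}\cdot \mathbf{y}}.
$$
Performing the substitution $\mathbf{z} = \mathbf{s} + \mathbf{x}$ (a bijection of $\mathbb{F}_2^n$ sending $\mathbf{s}+S_M$ onto $S_M$) and using bilinearity of the pairing $(\mathbf{s}+\mathbf{z})\cdot\mathbf{y} = \mathbf{s}\cdot\mathbf{y} + \mathbf{z}\cdot\mathbf{y}$ would yield
$$
\sum_{\mathbf{z}\in S_M} m(\mathbf{z})\,(-1)^{(\mathbf{s}+\mathbf{z})\cdot \mathbf{y}} = (-1)^{\mathbf{s}\cdot \mathbf{y}}\sum_{\mathbf{z}\in S_M} m(\mathbf{z})\,(-1)^{\mathbf{z}\cdot \mathbf{y}} = 0,
$$
the last equality following from $\mathbf{y}\in B(M)$. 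Hence $\mathbf{y} \in B(\mathbf{s} + M)$, proving $B(M)\subset B(\mathbf{s}+M)$.

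The reverse inclusion is automatic from the involution $\mathbf{s} + (\mathbf{s} + M) = M$: applying what we have just proved with $\mathbf{s} + M$ in the role of $M$ gives $B(\mathbf{s}+M)\subset B(M)$.

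I do not anticipate any real obstacle here; the argument is structurally identical to the set-theoretic case, with multiplicities $m(\mathbf{x})$ playing the role of indicator values. The only point requiring a little care is the bookkeeping of the support after translation, to justify that the change of variables $\mathbf{z} = \mathbf{s} + \mathbf{x}$ is a bijection between $S_{\mathbf{s}+M}$ and $S_M$; once this is noted, the factorisation of the global sign $(-1)^{\mathbf{s}\cdot\mathbf{y}}$ handles the rest.
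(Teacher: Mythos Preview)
Your proof is correct and follows essentially the same approach as the paper: both compute the exponential sum for $\mathbf{s}+M$, factor out the global sign $(-1)^{\mathbf{s}\cdot\mathbf{y}}$, and invoke the involution $\mathbf{s}+(\mathbf{s}+M)=M$ for the reverse inclusion. Your version is slightly more explicit about the change of variables and the identification $S_{\mathbf{s}+M}=\mathbf{s}+S_M$, but the argument is the same.
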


\begin{proof}
Let $\mathbf{y}\in B(M)$, then
$$
\sum_{\mathbf{x}\in S_M} m'(\mathbf{s}+\mathbf{x}) \cdot (-1)^{(\mathbf{s}+\mathbf{x})\cdot \mathbf{y}} = (-1)^{\mathbf{s}\cdot\mathbf{y}} \sum_{\mathbf{x}\in S_M} m(\mathbf{x}) \cdot (-1)^{\mathbf{x}\cdot \mathbf{y}} = 0.
$$

And thus $\mathbf{y}\in B(\mathbf{s}+M)$. As $\mathbf{s}+(\mathbf{s}+M) = M$ we have the result.
\hfill $\square$
\end{proof}

\begin{proposition}
Let $M$ be a nonempty Boolean multiset and let $C(M)$ be its constant set. Then $B(M)$ is a disjoint union of affine spaces all of them with $C(M)$ as their underlying vector space.
\end{proposition}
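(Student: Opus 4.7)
The plan is to mirror the proof of \textit{Theorem 1} (Affine structure of the balancing set), exploiting the previous lemma $C(M)=C(S_M)$ together with the translation invariance of $B(M)$ just established. The key observation is that the sum $\sum_{\mathbf{x}\in S_M} m(\mathbf{x})(-1)^{\mathbf{x}\cdot\mathbf{y}}$ only depends on the values of $\mathbf{x}\cdot\mathbf{y}$ as $\mathbf{x}$ ranges over $S_M$, so vectors $\mathbf{y}$ that agree with some fixed $\mathbf{z}$ on this pairing data yield the same sum and therefore balance $M$ simultaneously.

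First, by the preceding proposition I may translate so that $\mathbf{0}\in S_M$, which does not alter $B(M)$. Next, set $r=\rk(S_M)$ and pick $r$ independent vectors $\mathbf{s}_1,\ldots,\mathbf{s}_r\in S_M$. For each $\mathbf{b}=(b_1\ldots b_r)\in\mathbb{F}_2^r$ consider the linear system $\{\mathbf{s}_j^*=b_j\mid j=1,\ldots,r\}$; exactly as in Theorem 1, its solution set is an affine space of dimension $n-r$ whose underlying vector space is the solution to the homogeneous system, namely $\bigcap_{j} H_{\mathbf{s}_j}=C(S_M)=C(M)$. The $2^r$ systems partition $\mathbb{F}_2^n$ into $2^r$ disjoint translates of $C(M)$.

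Now I would show that membership of $B(M)$ is constant on each such translate. Suppose $\mathbf{y}$ and $\mathbf{z}$ solve the same system, so $\mathbf{s}_j\cdot\mathbf{y}=\mathbf{s}_j\cdot\mathbf{z}$ for every $j$. By bilinearity this equality extends to every element of $\langle\mathbf{s}_1,\ldots,\mathbf{s}_r\rangle$, which contains $S_M$. Hence $(-1)^{\mathbf{x}\cdot\mathbf{y}}=(-1)^{\mathbf{x}\cdot\mathbf{z}}$ for every $\mathbf{x}\in S_M$, and so
$$
\sum_{\mathbf{x}\in S_M} m(\mathbf{x})\,(-1)^{\mathbf{x}\cdot\mathbf{y}} \;=\; \sum_{\mathbf{x}\in S_M} m(\mathbf{x})\,(-1)^{\mathbf{x}\cdot\mathbf{z}}.
$$
Therefore $\mathbf{y}\in B(M)$ if and only if $\mathbf{z}\in B(M)$, and $B(M)$ is a union of those translates of $C(M)$ for which the common sum vanishes. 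Since the translates are pairwise disjoint, this is the desired decomposition.

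There is no real obstacle here: the only substantive change relative to the set case is that we work with a weighted signed sum rather than simply counting the intersection $S\cap H_{\mathbf{y}}$, but the weights $m(\mathbf{x})$ are inert under the argument because the sign $(-1)^{\mathbf{x}\cdot\mathbf{y}}$ is already determined by the coset information $\{\mathbf{s}_j\cdot\mathbf{y}\}_{j=1}^r$. If anything is worth double--checking in the write--up, it is that one invokes the previously proven lemma $C(M)=C(S_M)$ to identify the underlying vector space of every affine piece with $C(M)$, rather than with $C(S_M)$ directly.
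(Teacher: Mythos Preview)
Your proposal is correct and follows exactly the approach the paper intends: the paper's own proof is a one--line reference, ``The proof is identical as that of Theorem~1,'' and what you have written is precisely that argument adapted to the weighted sum, with the identification $C(M)=C(S_M)$ from the preceding lemma and the translation invariance of $B(M)$ from the preceding proposition used in the right places.
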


The reasoning is the same as the one for the set situation, and we also still have that the balancing index of a Boolean multiset is invariant by isomorphisms of $\mathbb{F}_2^n$. We will now extend the definition of fully balanced and the main results regarding this property.

\begin{definition}{(Fully balanced multiset.)}
Let $M$ be a nonempty Boolean multiset. We say that it is a fully balanced multiset if $B(M)\cup C(M) = \mathbb{F}_2^n.$
\end{definition}

\begin{theorem}
Let $M = (\f{2}{n},m)$ be a nonempty Boolean multiset; then it is fully balanced if and only if $S_M$ is an affine space and $m$ is constant.
\end{theorem}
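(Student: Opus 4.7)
The plan is to prove the two implications separately. The ``if'' direction is straightforward: assuming $S_M$ is an affine space and $m\equiv c$ on $S_M$, for every $\mathbf{y}\in\mathbb{F}_2^n$ we have
$$
\sum_{\mathbf{x}\in S_M} m(\mathbf{x})(-1)^{\mathbf{x}\cdot\mathbf{y}} \;=\; c\!\sum_{\mathbf{x}\in S_M} (-1)^{\mathbf{x}\cdot\mathbf{y}},
$$
so $\mathbf{y}$ balances (resp.\ fixes) $M$ exactly when it balances (resp.\ fixes) the underlying set $S_M$. Corollary $5$ guarantees $B(S_M)\cup C(S_M) = \mathbb{F}_2^n$ whenever $S_M$ is an affine space, so the corresponding equality $B(M)\cup C(M) = \mathbb{F}_2^n$ follows, and $M$ is fully balanced.

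For the converse, suppose $M$ is fully balanced. Using the translation invariance of $B(\cdot)$ for multisets, I reduce to the case $\mathbf{0}\in S_M$, and set $V = \langle S_M\rangle$, $r=\dim V$. The identity $C(M)=C(S_M)$, combined with Lemma $2$, identifies $C(M)$ with $V^\perp$, of dimension $n-r$. Full balance thus forces, for every $\mathbf{y}\notin V^\perp$,
$$
\sum_{\mathbf{x}\in S_M} m(\mathbf{x})(-1)^{\mathbf{x}\cdot\mathbf{y}} \;=\; 0.
$$
This sum depends only on the class of $\mathbf{y}$ modulo $V^\perp$, and the canonical isomorphism $\mathbb{F}_2^n/V^\perp\cong V^*$, with $V^* = \mathrm{Hom}(V,\mathbb{F}_2)$, sending $\mathbf{y}\mapsto(\mathbf{x}\mapsto\mathbf{x}\cdot\mathbf{y})$, rewrites the condition as: for every non-trivial $\lambda\in V^*$,
$$
\sum_{\mathbf{x}\in V} g(\mathbf{x})(-1)^{\lambda(\mathbf{x})} \;=\; 0,
$$
where $g:V\to\mathbb{Z}_{\geq 0}$ extends $m$ by zero on $V\setminus S_M$.

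The decisive step is to interpret this as the vanishing of every non-trivial Hadamard (Fourier) coefficient of $g$ on $V\cong \mathbb{F}_2^r$. The standard inversion formula over $\mathbb{F}_2^r$ then forces $g$ to be constant on $V$, and since $g(\mathbf{0})=m(\mathbf{0})>0$ this constant is strictly positive. Hence $g>0$ on all of $V$, which forces $S_M\supseteq V$ and therefore $S_M=V$, with $m$ constant on $S_M$. Undoing the initial translation yields the desired conclusion that $S_M$ is an affine space with constant multiplicity. The only subtlety I foresee is the bookkeeping of the dual identification $\mathbb{F}_2^n/V^\perp\cong V^*$, which is required to turn ``full balance on the ambient $\mathbb{F}_2^n$'' into ``vanishing of \emph{all} non-trivial Fourier coefficients on the subspace $V$'' rather than on $\mathbb{F}_2^n$ itself; once that identification is set up cleanly, the classical Fourier argument closes the proof.
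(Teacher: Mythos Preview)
Your proof is correct and takes a genuinely different route from the paper's.

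The paper proves the converse by adapting the inductive argument of Proposition~5 to the multiset setting: it checks the base case $n=2$ by writing out the three balancing equations and solving to get $m(00)=m(01)=m(10)=m(11)$, and then invokes the same coordinate-splitting induction (partition into $S_0$ and $S_1$ according to the first bit) used for ordinary sets. Your argument instead recognises the full-balance condition, after the reduction to $\mathbf{0}\in S_M$ and the identification $\mathbb{F}_2^n/V^\perp\cong V^*$, as the vanishing of every non-trivial Fourier coefficient of the extended function $g$ on $V$, and closes with Fourier inversion in one stroke.

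What each buys: the paper's induction is entirely self-contained within the framework already built, requiring no outside machinery. Your approach is shorter and more conceptual, and it makes transparent \emph{why} constancy is forced (a function with only the trivial Fourier mode is constant). It is worth noting that the paper's remark immediately following the theorem essentially points at your argument---observing that the system of balancing equations is encoded by the Hadamard matrix $\mathbf{H}_n$ with the first row removed, whose kernel is $\langle\mathbf{1}\rangle$---so your proof can be read as carrying that observation through to an actual proof rather than leaving it as a reformulation.
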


\begin{proof}
It is clear that any such $M$ is fully balanced. The converse implication can be proven either by induction or by following the sketch in the proof of Theorem \ref{FBsetsTh} but replacing $|S|$ by $|M|$ and $1_S$ by $m$. If we did so, we would still have that if $m(\w{0})\neq 0$, then $\widehat{m} = |M|1_{C(M)},$ just as before. As the inverse Fourier--Hadamard formula still applies, we would end up with:
$$
m = \frac{|M||C(M)|}{2^n}1_{C(M)^{\perp}},
$$
having thus our result.
\hfill $\square$ 
\end{proof}

The main problem that we have been trying to solve has been that of determining the possible Fourier--Hadamard supports---or, equivalently, balancing sets---that Boolean functions can have, but what happens when we consider multisets? The final result we will reach is the following

\begin{theorem}{(Multiset balancing sizes.)}\label{multisize}
For any set $S$ such that $\w{0}\in S$ there is a multiset whose multiplicity function has $S$ as its Fourier--Hadamard support.
\end{theorem}

However, we will not prove this right away, and instead, we will first focus on a constructive method to create multisets that have increasingly smaller balancing sets.

\begin{lemma}\label{multisize2}
Let $M_1 = (\f{2}{n},m_1)$ be a nonempty multiset such that there is an $\w{y}\in B(M_1)$, then there is another multiset $M_2 = (\f{2}{n},m_2)$ satisfying $B(M_2) = B(M_1)\setminus \{\w{y}\}$.
\end{lemma}

\begin{proof}
Thanks to the inverse Fourier--Hadamard transform we know that for all $\w{x}\in\f{2}{n}$
$$
m_1(\w{x}) = \frac{1}{2^n}\widehat{\widehat{m_1}}(\w{x}).
$$

If we want to change some values in $\widehat{m_1}$, the only two conditions that $\widehat{\widehat{m_1}}(\w{x})$ must satisfy in order to determine a multiset are the following: first, it is clear that $\widehat{\widehat{m_1}}(\w{x})$ must be nonnegative for all $\w{x}\in\f{2}{n}$, and secondly, it must be a multiple of $2^n$. If we set then
$$
\widehat{m_2}(\w{x}) = \begin{cases} \widehat{m_1} +2^{n-1} & \text{if } \w{x} = \w{0} \text{ or } 
 \w{x} = \w{y} \\ \widehat{m_1}(\w{x}) & \text{otherwise,}  \end{cases}
$$
it is clear that 
$$
\widehat{\widehat{m_2}}(\w{x}) = \begin{cases} \widehat{\widehat{m_1}}(\w{x}) & \text{if } \w{x}\cdot\w{y} = 1 \\ \widehat{\widehat{m_1}}(\w{x}) + 2^n & \text{if } \w{x}\cdot\w{y} = 0 \end{cases}
$$
and thus both conditions are satisfied. As $M_1$ was nonempty, $\widehat{m_1}(\w{0})$ was not $0$, so $B(M_2) = B(M_1)\setminus\{\w{y}\}$.
\hfill $\square$ 
\end{proof}

We will see this with an example as soon as we finish the proof of the general result.

\begin{proof}{(Theorem \ref{multisize}.)}
We know multisets in $\f{2}{n}$ whose Fourier--Hadamard support is just $\{\w{0}\}$---for instance $\f{2}{n}$---so we only need to iterate the procedure of Lemma \ref{multisize2} to obtain the desired multiset, but we can actually just begin with the empty multiset.
\hfill $\square$ 
\end{proof}


\end{section}

\begin{section}{Application in Quantum Computing}\label{quant}

We will finally point out why it seems relevant to study the Fourier--Hadamard and Walsh transforms for multisets and in the fully balanced situation. This will be done by tying down the Walsh transform of vectorial functions and some results in quantum computing. As we already mentioned in the introduction, the Walsh transform appears in quantum algorithms in which the phase kick-back technique \cite{kaye} is applied. In particular, it first appeared in the Deutsch--Jozsa algorithm \cite{dyj}, where the final amplitude associated to a certain state $\ket{\w{x}}_n$ when the algorithm is applied to a Boolean function $f:\f{2}{n}\to\f{2}{}$ is shown to be $W_f(\w{x})/2^n$.

The Walsh transform of vectorial functions also shows up in the generalised version of this algorithm \cite{gpk}. Indeed, once again, the final amplitude associated to a certain state $\ket{\w{x}}_n$ when the algorithm is applied to a vectorial function $F:\f{2}{n}\to\f{2}{m}$ using a marker $\w{y}\in\f{2}{m}$ is shown to be $W_F(\w{x},\w{y})/2^n$. We will show now the relation between fully balanced multisets and the Walsh transform of vectorial functions. To do so, we will first define some concepts.

\begin{definition}{($\mathbf{y}$-Balanced function.)}
Let $F:\f{2}{n}\to \f{2}{m}$ be a vectorial function and let $\mathbf{y}\in \f{2}{m}$. We say that $F$ is $\mathbf{y}$-balanced if we have $F(\mathbf{x})\cdot \mathbf{y} = 0$ for half of the vectors $\mathbf{x}\in\f{2}{n}$ and $F(\mathbf{x}) \cdot \mathbf{y} = 1$ for the other half.
\end{definition}

In the same way, we can define the idea of $\mathbf{y}$-constant functions.

\begin{definition}{($\mathbf{y}$-Constant function.)}
Let $F:\f{2}{n}\to \f{2}{m}$ be a vectorial function and let $\mathbf{y}\in \f{2}{m}$. We say that $F$ is $\mathbf{y}$-constant if for every vector $\mathbf{x}\in\f{2}{n}$ the result of $F(\mathbf{x}) \cdot \mathbf{y}$ is the same.
\end{definition}

As we have seen, these definitions can be translated in terms of the Fourier--Hadamard transform.

\begin{proposition}
Let $F:\f{2}{n}\to\f{2}{m}$ be a vectorial function. Then, $F$ is $\mathbf{y}$-balanced if and only if:
$$
\sum_{\mathbf{x}\in\f{2}{n}} (-1)^{F(\mathbf{x})\cdot\mathbf{y}} = 0.
$$

Similarly, $F$ is $\mathbf{y}$-constant if and only if:
$$
\left|\sum_{\mathbf{x}\in\f{2}{n}} (-1)^{F(\mathbf{x})\cdot\mathbf{y}} \right| = 2^n.
$$
\end{proposition}

The next result now becomes trivial.

\begin{theorem}
Let $F:\f{2}{n}\to\f{2}{m}$ be a vectorial function and $\mathbf{y}\in\f{2}{m}$. Then, $F$ is $\mathbf{y}$-constant if and only if $\big|W_F(\w{0},\w{y})\big| = 2^n$. Besides, $F$ is $\w{y}$-balanced if and only if $W_F(\w{0},\w{y}) = 0$.
\end{theorem}

Following things up, we will now focus on the fully balanced situation.

\begin{definition}{(Fully balanced functions.)}
Let $F:\f{2}{n}\to\f{2}{m}$ be a vectorial function, we say that it is fully balanced if for every $\mathbf{y}\in\f{2}{m}$, $F$ is either $\mathbf{y}$-balanced or $\mathbf{y}$-constant.
\end{definition}

So we say that a function is fully balanced if the multiset of its image is fully balanced. We can likewise define the concepts of $C(F)$, $B(F)$ and $b(F)$ for a given vectorial function $F$. We will finish things up by highlighting a trivial consequence of this result that ties down the Walsh transform of a vectorial function and the balancing set of its image multiset. As a previous notation remark, we will refer to the image of $F$ multiset as $I_F = (\f{2}{m},m(\w{x}))$ with $m(\w{x}) = |F^{-1}(\w{x})|.$

\begin{corollary}
Let $F:\f{2}{n}\to\f{2}{m}$ be a fully balanced vectorial function. Then,
$$
\Big|W_F(\w{0},\w{v})\Big| = 2^n\, 1_{C(I_F)}(\w{v}),
$$
where $\w{v}\in\f{2}{m}$ and $1_{C(I_F)}$ is the indicator function of $C(I_F)$.
\end{corollary}

In other words, when we are dealing with fully-balanced functions, we can determine whether a marker $\w{y}$ is in the balancing set or in the constant set of its image multiset by looking at $\GPK(\w{y})$. If the result is zero, then $\w{y}$ is in fact in the constant set, while if we get any other result, then $\w{y}$ is in the balancing set. The idea now is that we can use the $\GPK$ algorithm to determine the dimension of the image of a given fully balanced function, and even compute said image, but this will be done on a separate publication.

\end{section}

\begin{credits}
\subsubsection{\ackname} The research of the first author is partly supported by the \emph{Norwegian Research Council}, ID 314395. The research of the second and third authors is supported by the \emph{Ministerio de Ciencia e Innovación} under Project PID2020-114613GB-I00 (MCIN/AEI/10.13039/501100011033).

\subsubsection{\discintname} The authors report that there are no competing interests to declare.
\end{credits}

\bibliographystyle{splncs04}
\bibliography{mybibliography}

\begin{thebibliography}{10}
\providecommand{\url}[1]{\texttt{#1}}
\providecommand{\urlprefix}{URL }
\providecommand{\doi}[1]{https://doi.org/#1}

\bibitem{byv2}
Bernstein, E., Vazirani, U.: Quantum {C}omplexity {T}heory. SIAM Journal on Computing  \textbf{26}(5),  1411--1473 (1997)

\bibitem{bb1}
Berthiaume, A., Brassard, G.: The {Q}uantum {C}hallenge to {S}tructural {C}omplexity {T}heory. In: [1992] Proceedings of the 7th Annual Structure in Complexity Theory Conference. pp. 132--137 (1992). \doi{10.1109/SCT.1992.215388}

\bibitem{bfc}
Carlet, C.: {B}oolean {F}unctions for {C}ryptography and {C}oding {T}heory. Cambridge University Press (2021)

\bibitem{car1}
Carlet, C., Mesnager, S.: On the {S}upports of the {W}alsh {T}ransforms of {B}oolean {F}unctions. Cryptology ePrint Archive, Paper 2004/256 (2004), \url{https://eprint.iacr.org/2004/256}, \url{https://eprint.iacr.org/2004/256}

\bibitem{dyj}
Deutsch, D., Jozsa, R.: Rapid {S}olution of {P}roblems by {Q}uantum {C}omputation. Proceedings of the Royal Society of London. Series A: Mathematical and Physical Sciences  \textbf{439},  553--558 (1992)

\bibitem{kaye}
Kaye, P., Laflamme, R., Mosca, M.: An {I}ntroduction to {Q}uantum {C}omputing. OUP Oxford (2007)

\bibitem{mac}
MacWilliams, F.J., Sloane, N.J.A.: The {T}heory of {E}rror {C}orrecting {C}odes. Elsevier (1977)

\bibitem{oyt}
Ossorio-Castillo, J., Tornero, J.M.: Quantum {C}omputing from a {M}athematical {P}erspective: a {D}escription of the {Q}uantum {C}ircuit {M}odel. arXiv preprint arXiv:1810.08277  (2018)

\bibitem{gpk2}
Ossorio-Castillo, J., Pastor-D{\'\i}az, U., Tornero, J.M.: Further {A}ppliactions of the {G}eneralised {P}hase {K}ick-{B}ack, {P}reprint. (2024)

\bibitem{gpk}
Ossorio-Castillo, J., Pastor-D{\'\i}az, U., Tornero, J.M.: A {G}eneralisation of the {P}hase {K}ick-{B}ack. Quantum Information Processing  \textbf{22}(3), ~143 (2023)

\bibitem{syro}
Syropoulos, A.: Mathematics of {M}ultisets. In: Calude, C.S., P{\u{a}}un, G., Rozenberg, G., Salomaa, A. (eds.) Multiset Processing. pp. 347--358. Springer Berlin Heidelberg, Berlin, Heidelberg (2001)

\end{thebibliography}

\end{document}